\documentclass[psamsfonts]{amsart}
\usepackage[utf8]{inputenc}
\usepackage{amsfonts}
\usepackage{hyperref}
\hypersetup{
pdftitle={Lieb-Thirring inequality on Sphere},
pdfsubject={Mathematics, Lieb-Thirring inquality},
pdfauthor={Shihang Pan},
pdfkeywords={Lieb-Thirring inequalites, Sphere}
}
\usepackage{mathtools}
\usepackage{amsmath}
\usepackage{xcolor}
\usepackage{amsthm}
\usepackage{pdflscape}
\usepackage{pgfplots}
\usepackage{mathrsfs}
\usepackage{indentfirst}

\newtheorem{thm}{Theorem}[section]

\theoremstyle{definition}

\theoremstyle{remark}

\makeatletter
\makeatother
\numberwithin{equation}{section}

\bibliographystyle{plain}

%--------Meta Data: Fill in your info------
\title{Lieb-Thirring inequality on the four-dimensional sphere and Torus}

\author{Shihang Pan }

\address{ Shihang Pan \newline \ \ \ School of Mathematical Sciences, Beijing Normal University, Beijing 100875, China}
% School of Mathematical Sciences, Beijing Normal University, Beijing 100875, China

\begin{document}

\renewcommand{\theequation}{\thesection.\arabic{equation}}

\begin{abstract}
In this paper, we mainly study the Lieb-Thirring inequality for families of orthonormal scalar functions  on the four-dimensional sphere $ \mathbb{S}^{4} $ and torus $ \mathbb{T}^{4} $. The bounds of all the constants involved are obtained. Specifically, we prove that the costant $ \mathcal{K}_{4}(\mathbb{S}^{4}) $ of the Lieb-Thirring inequality on the sphere $ \mathbb{S}^{4} $ satisfies
$$ 0.0844 \leq \mathcal{K}_{4}(\mathbb{S}^{4})\leq 0.1728,$$
and the constant $ \mathcal{K}_{4}(\mathbb{T}^{4}) $ of the Lieb-Thirring inequality on the torus $ \mathbb{T}^{4} $ satisfies
$$ 0.0190 \leq \mathcal{K}_{4}(\mathbb{T}^{4}) \leq   0.1222.$$
\\\\
\noindent \textbf{Keywords.} Lieb-Thirring inequality, Schr\"odinger operator, Sphere, Torus.\\\\
\noindent \textbf{Mathematics Subject Classification (2010):} 35P15,\ 35Q30,\ 26D10.\\
%\noindent \textbf{Personal website:} \ \href{https://kolosovpetro.github.io/}{kolosovpetro.github.io}\\
%\noindent \textbf{ORCID:} \ \href{https://orcid.org/0000-0002-6544-8880}{0000-0002-6544-8880}\\
\noindent \textbf{e-mail:} \ \ \ \href{mailto:panshihang@mail.bnu.edu.cn}{panshihang@mail.bnu.edu.cn}\\
\end{abstract}

\maketitle
\tableofcontents

\section{Introduction}
Lieb-Thirring inequality is an important class of inequalities in quantum mechanics and semi-classical analysis.
It was first proposed by Lieb and Thirring \cite{LT76} to study the stability of matter in quantum mechanics.
Later, it had been applied to many areas of mathematics, such as, the infinite dimensional dynamical systems and  the attractors of the Navier-Stokes equations\cite{T97}.

Lieb-Thirring inequality gives a upper bounds for the sum of the $ \gamma-$powers of the negative eigenvalues $\{ \lambda_{n}(- \Delta + V) \}_{n}$ (from variational principle and min-max principle, we can set $ -\infty < \lambda_{1} \leq \lambda_{2} \leq ... $) of the  Schr\"odinger operator $H \doteq  -\Delta  + V   $ in $ \mathbb{R}^{d},$ namely
\begin{eqnarray}\label{S11}
\sum_{\lambda_{n} < 0}| \lambda_{n}(-\Delta + V) |^{\gamma} \leq \mathcal{L}_{\gamma, d}\int_{\mathbb{R}^{d}} V_{-}^{\gamma + \frac{d}{2}}(x) dx.
\end{eqnarray}
where $ \Delta $ denotes the Laplace operator in $ \mathbb{R}^{d} $, $ V $ is a real-value potential in $ \mathbb{R}^{d} $ with $ V \in L^{\gamma + \frac{d}{2}}(\mathbb{R}^{d}),$ $ V_{-}(x) = \max\{ 0, -V(x) \},$ and  $ \mathcal{L}_{ \gamma, d} $ is the finite non-negative best constant depended only on $ \gamma\ \text{and} \ d.$
%It is clear that the Schr\"odinger operator $H $ is self-adjoint, so that its spectrum is discrete \cite{}.
In \cite{LT76}, Lieb-Thirring inequality (\ref{S11}) had been proved to hold in the following cases:
$$ d = 1, \gamma \geq \frac{1}{2},\ \ \ d = 2, \gamma > 0,\ \ \ \text{and}\ d\geq 3,\gamma \geq 0. $$
For the sharp constant $\mathcal{L}_{\gamma, d} $ in inequality (\ref{S11}), Lieb and Thirring \cite{LT76} conjectured that:
\begin{eqnarray}\label{LTconjecture}
\mathcal{L}_{\gamma, d} = \max\{\mathcal{L}_{\gamma, d}^{(1)}, \  \mathcal{L}_{\gamma, d}^{cl}   \},
\end{eqnarray}
where $\mathcal{L}_{\gamma, d}^{(1)}$ is so-called one-particle constant which is the optimal constant for the Keller's problem\cite{K61},
and $ \mathcal{L}_{\gamma, d}^{cl} $ is so-called semi-classical constant in Weyl's law:
$$ \lim_{\hbar \rightarrow 0} \hbar^{d}\sum_{n}|\lambda_{n}(-\hbar^{2}\Delta + V)|^{\gamma} = \mathcal{L}_{\gamma, d}^{cl}\int_{\mathbb{R}^{d}} V_{-}^{\gamma + \frac{d}{2}}(x) dx,$$
with
$$ \mathcal{L}_{\gamma, d}^{cl} = (4\pi)^{-\frac{d}{2}}\frac{\Gamma(\gamma + 1)}{\Gamma(\gamma + 1 + \frac{d}{2})}.$$
Inserting inequality (\ref{S11}) to the left hand side of the Weyl's laws, one can see $ \mathcal{L}_{\gamma, d} \geq  \mathcal{L}_{\gamma, d}^{cl}.$
When $ \gamma \geq \frac{3}{2} $ and $d \geq 1$, the Lieb-Thirring conjecture (\ref{LTconjecture}) had been proved  in \cite{LW00}, but for some special values of $\gamma$ and $d$, (\ref{LTconjecture}) had been proved to be failed, details can be found in \cite{F21}.

Here, we introduce some results about the best constants for the Lieb-Thirring inequality. In the original article \cite{LT76}, Lieb and Thirring proved that the best constant $  \mathcal{L}_{\gamma, d} = \mathcal{L}_{\gamma, d}^{cl} $  for $ \gamma =  \frac{2k + 1}{2}\ \text{in}\ k\in \mathbb{N}$ and $ d = 1 $.
For all $ \gamma \geq \frac{3}{2} $ in $ d = 1 $, Aizenman and Lieb \cite{AL78} obtained the similar result, namely  $  \mathcal{L}_{\gamma, 1} = \mathcal{L}_{\gamma, 1}^{cl}$.
Later, Laptev and Weidl proposed the 'lifting argument' and extended Aizenman-Lieb argument\cite{AL78} to high dimemsions \cite{LW00}, namely,  for all $\gamma \geq \frac{3}{2}$ and $ d \geq 1 $,   $ \mathcal{L}_{\gamma, d} = \mathcal{L}_{\gamma, d}^{cl} $.
When $ \gamma = \frac{1}{2}$ and $ d = 1$, the Lieb-Thirring conjecture (\ref{LTconjecture}) was proved to be true by Hundertmark, Lieb and Thomas \cite{HLT98}, namely $\mathcal{L}_{\frac{1}{2}, 1} = \mathcal{L}_{\frac{1}{2}, 1}^{(1)} = 2\mathcal{L}_{\frac{1}{2}, 1}^{cl}$.
By a standard dual argument, when $ \gamma = 1 $, inequality (\ref{S11}) is equivalent to the following inequality:
\begin{eqnarray}\label{S12}
\sum_{j = 1}^{N}\int_{\mathbb{R}^{d}}| \nabla u_{j} |^{2}dx \geq \mathcal{K}_{d}\int_{\mathbb{R}^{d}}\bigg(\sum_{j = 1}^{N}| u_{j} |^{2}  \bigg)^{1 + \frac{2}{d}}dx,\ \ d \geq 1,
\end{eqnarray}
where  $ \{u_{j}\}_{j = 1}^{N} \subseteq H^{1}(\mathbb{R}^{d}) $ is an arbitrary series of orthonormal functions in $ L^{2}(\mathbb{R}^{d}),$ and the corresponding constants $\mathcal{K}_{d}$ and $ \mathcal{L}_{1, d} $ satisfy
\begin{eqnarray}\label{S13}
\big( (1 + \frac{d}{2})\mathcal{L}_{1, d} \big)^{1 + \frac{2}{d}}\big(  (1 + \frac{2}{d})\mathcal{K}_{d}  \big)^{1 + \frac{d}{2}} = 1,
\end{eqnarray}
and the inequality (\ref{S12}) is also called Lieb-Thirring inequality.
By researching the equivalent inequality (\ref{S12}), there has been a lot of progress about the bounds on the best constant in the cases $ \gamma = 1$, details can be found in \cite{BS96, DLL08, EF91, HLW00, L84}.
But the cases $\gamma \in (\frac{1}{2}, \frac{3}{2}) $ in  $ d = 1 $, the conjecture (\ref{LTconjecture}) is still open.
The currently best bounds on the optimal constants $ \mathcal{L}_{\gamma, d} $ for $ \gamma \in (\frac{1}{2}, \frac{3}{2}) $  are
\begin{eqnarray*}
\mathcal{L}_{\gamma, d} \leq
\left\{ \begin{aligned}
&2.912\mathcal{L}_{\gamma, d}^{cl}\ \ \ \text{if}\ \frac{1}{2} \leq \gamma < 1, \  \text{and}\ \ d \geq 2 ,\\
&2\mathcal{L}_{\gamma, 1}^{cl}  \ \ \ \ \ \ \ \ \text{if}\ \frac{1}{2} \leq \gamma < 1, \ \text{and}\ \ d = 1,  \\
& 1.456\mathcal{L}_{\gamma, d}^{cl}  \ \ \  \text{if}\ 1 \leq \gamma < \frac{3}{2}.
\end{aligned}
\right.
\end{eqnarray*}
The first bounds follows from Laptev-Weidl lifting argument\cite{LW00},
the second bounds extends  Hundertmark's argument\cite{HLT98} to more general situation $ \gamma \in (\frac{1}{2}, 1)$ in $d = 1$,
and the third bounds was obtained by the optimal momentum decomposition and low momentum averaging from \cite{FHJN21}. When $ \gamma = 0$ in $ d \geq 3, $ Lieb-Thirring inequality (\ref{S11}) is so-called Cwikel-Lieb-Rozenblum (CLR) inequality:
\begin{eqnarray}\label{CLR}
N_{\leq}(- \Delta + V) \leq \mathcal{L}_{0, d}\int_{\mathbb{R}^{d}}V_{-}^{\frac{d}{2}}(x)dx,\ \ d \geq 3,
\end{eqnarray}
due to the works of Cwikel \cite{C77}, Lieb \cite{L76,L79} and Rozenblum \cite{R72,R76},
where $N_{\leq}(- \Delta + V)$ denotes the number of the negative eigenvalues of the Schr\"odinger operator, counting multiplicities.
When $ d = 3 $, Lieb \cite{L76,L79} proved that the  constant $ \mathcal{L}_{0, 3} $ satisfies
$$ \mathcal{L}_{0, 3} \leq 6.868924\mathcal{L}_{0, 3}^{cl}.$$

Analogue to $ \mathbb{R}^{d} $, the Lieb-Thirring inequality  on the manifolds $\mathbb{M}^{d} $ can be stated as
\begin{eqnarray}\label{LTonS}
\sum_{\nu_{n} \leq 0 }| \nu_{n} |^{\gamma} \leq \mathcal{L}_{\gamma, d}(\mathbb{M}^{d})\int_{\mathbb{M}^{d}} \big(V_{-}(x)\big)^{\gamma + \frac{d}{2}}dx,
\end{eqnarray}
where $\mathbb{M}^{d}$ is $d$-dimensional unit sphere $\mathbb{S}^{d}$ or torus $\mathbb{T}^{d},$
$ \{\nu_{n}\}_{n} $ is the negative eigenvalues  of the Schr\"odinger operator  $ - \Delta_{0} + V$( $-\infty < \nu_{1} \leq \nu_{2} \leq ... $), and $\Delta_{0}$ is the Laplace operator on $\mathbb{M}^{d}.$
 In \cite{I93}, Ilyin proved the inequality (\ref{LTonS}) by the method in \cite{LT76}, and applied it to study the attractors of Navier-Stokes equation on the sphere.
By standard dual argument,
when $ \gamma = 1 $, Lieb-Thirring inequality (\ref{LTonS}) also has the following equivalent forms:
\begin{eqnarray}\label{LTonSe}
 \int_{\mathbb{M}^{d}}\bigg(\sum_{j = 1}^{N}| u_{j} |^{2}  \bigg)^{1 + \frac{2}{d}}dx  \leq \mathcal{K}_{d}(\mathbb{M}^{d})\sum_{j = 1}^{N}\int_{\mathbb{M}^{d}}| \nabla_{0} u_{j} |^{2}dx ,\ \ d \geq 2,
\end{eqnarray}
where $\{ u_{j} \}_{j = 1}^{N} \subseteq \bar{H}^{1}(\mathbb{M}^{d}) \doteq {H}^{1}(\mathbb{M}^{d}) \cap \{ f| \int_{\mathbb{S}^{d}} f dS = 0  \} \subseteq {H}^{1}(\mathbb{M}^{d})$ is an  arbitrary  orthonormal series of scalar functions in
$ L^{2}(\mathbb{S}^{d}),$ $ \nabla_{0} $ denotes the  gradient operator on $ \mathbb{M}^{d} $,
and the corresponding constants $\mathcal{K}_{d}(\mathbb{M}^{d})$ and $\mathcal{L}_{\gamma, d}(\mathbb{M}^{d})$ also satisfy
$$\big( (1 + \frac{d}{2})\mathcal{L}_{1, d}(\mathbb{M}^{d}) \big)^{1 + \frac{2}{d}}\big(  (1 + \frac{2}{d})\mathcal{K}_{d}(\mathbb{M}^{d})  \big)^{1 + \frac{d}{2}} = 1.$$

By the Birman-Schwinger inequality (\cite{T97} Appendix 2.1),  Ilyin \cite{I12} directly proved the inequality (\ref{LTonS}) in $ \gamma = 1 $ and obtained the upper bounds for the best constants $ \mathcal{L}_{1, 2}(\mathbb{M}^{2}) $ and $ \mathcal{L}_{1, 3}(\mathbb{M}^{3})$,
namely, $ \mathcal{L}_{1, 2}(\mathbb{M}^{2}) < \frac{3}{8}$ and $ \mathcal{L}_{1, 3}(\mathbb{M}^{3}) \leq \delta_{\mathbb{M}^{3}}\frac{4}{15\pi}$ with $ \delta_{\mathbb{S}^{3}} = 1.0139\  \text{and}\ \delta_{\mathbb{T}^{3}} = 1$, where $\mathbb{M}^{2}\ \text{is}\ \mathbb{S}^{2}\ \text{or}\ \mathbb{T}^{2}$, $\mathbb{M}^{3}\ \text{is}\ \mathbb{S}^{3}\ \text{or}\ \mathbb{T}^{3}$.
In addition, %from \cite{IL19} the Lieb–Thirring inequalities (\ref{LTonSe})  are proved for an arbitrary orthonormal families of scalar and vector functions both on the whole sphere $\mathbb{S}^{2}$ and on proper
%domains $ \Omega  \subseteq \mathbb{S}^{2}.$
on the two-dimensional sphere $\mathbb{S}^{2}$ and torus $\mathbb{T}^{2}$, it was shown in  \cite{IL19} that  the constants in inequality (\ref{LTonSe}) satisfy $ \mathcal{K}_{2}(\mathbb{S}^{2}) \leq \frac{3}{2\pi} $ and  $ \mathcal{K}_{ 2}(\mathbb{T}^{2}) \leq \frac{6}{\pi^{2}}$, respectively.
Later, in \cite{IL20},  by applying the method in \cite{FHJN21}  to the manifolds $ \mathbb{M}^{2}$, Ilyin, Laptev and Zelik improved the upper bounds for the constants $\mathcal{K}_{ 2}(\mathbb{M}^{2})$  in (\ref{LTonSe}) to $ \frac{3\pi}{32},$ namely, $\mathcal{K}_{2}(\mathbb{M}^{2}) \leq  \frac{3\pi}{32} (\mathbb{M}^{2} \ \text{denotes}\ \mathbb{S}^{2} \ \text{or}$ $ \mathbb{T}^{2} ).$
On the domain $ \Omega \subseteq \mathbb{S}^{2}$, Ilyin and Laptev \cite{IL19} proved that the Lieb-Thirring inequality has the following form:
\begin{eqnarray}\label{LTdomain}
\int_{\Omega}\bigg(\sum_{i = 1}^{N}| u_{i} |^{2}  \bigg)^{2}dx \leq \mathcal{K}(\Omega)\sum_{i = 1}^{N}\int_{\Omega}| \nabla_{0} u_{i} |^{2}dx,
\end{eqnarray}
where  $ \{ u_{j} \}_{j = 1}^{N} $ is an orthonormal family of scalar functions in $ H_{0}^{1}(\Omega),$ and the best constant satisfies
$$ \mathcal{K}(\Omega) \leq \frac{ 2 }{\pi} \frac{4\pi + |\Omega|}{4\pi - |\Omega|}, \ \ 0 < |\Omega| < |\mathbb{S}^{2}| =  4\pi.$$

So far there are no results about the bounds of the optimal constant of Lieb-Thirring inequality (\ref{LTonSe}) on  four-dimensional sphere $ \mathbb{S}^{4}$ and torus $ \mathbb{T}^{4}.$
Motivated by \cite{IL20, FHJN21}, in this paper, we shall study the Lieb-Thirring inequality on four-dimensional sphere
$ \mathbb{S}^{4} $ and torus $ \mathbb{T}^{4}$, and obtain the bounds of the optimal constants $\mathcal{K}_{4}(\mathbb{S}^{4})$ and $ \mathcal{K}_{4}(\mathbb{T}^{4})$.
The main results can be stated as follows:
\begin{thm}\label{thm0}
Let $u_{1}, u_{2}, ..., u_{N} \in H^{1}(\mathbb{M}^{4})\cap \{f | \int_{\mathbb{M}^{4}} f dx = 0, \} $ be a family othornormal  scalar functions in $ L^{2}(\mathbb{M}^{4})$, where $ \mathbb{M}^{4} $ denotes $ \mathbb{S}^{4} \ \text{or} \ \mathbb{T}^{4}$. Then
\begin{equation}\label{thmLT01}
\int_{\mathbb{M}^{4}}\big( \sum_{j = 1}^{N}|u_{j}(x)|^{2}  \big)^{\frac{3}{2}}dx \leq \mathcal{K}_{4}(\mathbb{M}^{4})\sum_{j = 1}^{N}\int_{\mathbb{M}^{4}}|\nabla u_{j} |^{2}dx,
\end{equation}
where the best constants $\mathcal{K}_{4}(\mathbb{M}^{4})$ satisfy
\begin{equation}\label{thmLT02}
 0.0844 \leq \mathcal{K}_{4}(\mathbb{S}^{4}) \leq 0.1728,
\end{equation}
and
\begin{equation}\label{thmLT03}
 0.0190 \leq \mathcal{K}_{4}(\mathbb{T}^{4}) \leq 0.1222.
\end{equation}
\end{thm}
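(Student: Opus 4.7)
The plan is to pass through the Birman–Schwinger duality and then adapt to four dimensions the optimal momentum decomposition and low-momentum averaging scheme of \cite{FHJN21}, which was applied on $\mathbb{S}^2$ and $\mathbb{T}^2$ by \cite{IL20}. By the standard duality recalled in the introduction, (\ref{thmLT01}) is equivalent to the Lieb–Thirring inequality
$$
\sum_{\nu_n<0}|\nu_n|\le \mathcal{L}_{1,4}(\mathbb{M}^4)\int_{\mathbb{M}^4}V_-^{3}\,dx
$$
for the Schr\"odinger operator $-\Delta_0+V$ on $\bar{H}^{1}(\mathbb{M}^4)$, with $\mathcal{K}_4(\mathbb{M}^4)$ and $\mathcal{L}_{1,4}(\mathbb{M}^4)$ determining each other. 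Both the upper and lower bounds in the theorem can therefore be handled on either side of the duality.

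For the upper bound I would write $\sum|\nu_n|=\int_0^\infty N_{-}(-\Delta_0+V,-\tau)\,d\tau$ and bound the counting function by a trace norm of the Birman–Schwinger operator $V_-^{1/2}(-\Delta_0+\tau)^{-1}V_-^{1/2}$. Following \cite{FHJN21} and \cite{IL20} I would split the spectrum of $-\Delta_0$ on $\bar{H}^{1}(\mathbb{M}^4)$ at a cut-off $\Lambda>0$ into a finite-rank low-momentum projection $P_{\le\Lambda}$ and its complement, control the high-momentum piece via its Hilbert–Schmidt norm, and treat the low-momentum piece by averaging $V$ against the integral kernel of $P_{\le\Lambda}$ and reducing it to a finite-dimensional scalar estimate. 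The essential analytic input for the high-momentum part is the Green-function diagonal
$$
G_\tau(x,x)=\frac{1}{|\mathbb{M}^4|}\sum_{\mu\ne 0}\frac{1}{\mu+\tau},
$$
which is constant in $x$ thanks to the homogeneity of $\mathbb{S}^4$ and the translation invariance of $\mathbb{T}^4$; on $\mathbb{S}^4$ the sum runs over the spherical-harmonic eigenvalues $\mu=\ell(\ell+3)$ with multiplicities $N_\ell=(2\ell+3)(\ell+1)(\ell+2)/6$, while on $\mathbb{T}^4$ (of period $2\pi$) it runs over $k\in\mathbb{Z}^4\setminus\{0\}$ with $\mu=|k|^2$. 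Integrating in $\tau$ and optimizing in $\Lambda$ should then produce explicit upper bounds on $\mathcal{L}_{1,4}(\mathbb{M}^4)$; converting them back through the duality relation gives $\mathcal{K}_4(\mathbb{S}^4)\le 0.1728$ and $\mathcal{K}_4(\mathbb{T}^4)\le 0.1222$.

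For the lower bound I would test (\ref{thmLT01}) on concrete orthonormal families. On $\mathbb{S}^4$, taking all spherical harmonics of degrees $1\le \ell\le L$ makes $\rho=\sum|u_j|^2$ constant by the addition formula, so the ratio reduces to
$$
\frac{|\mathbb{S}^4|^{-1/2}\bigl(\sum_{\ell=1}^L N_\ell\bigr)^{3/2}}{\sum_{\ell=1}^L\ell(\ell+3)N_\ell},
$$
which by Weyl's law tends to the semiclassical value $3/(8\sqrt{2}\,\pi)\approx 0.0844$ as $L\to\infty$, giving the claimed bound on the sphere. On $\mathbb{T}^4$, a small concrete family of Fourier characters (or, equivalently, testing the dual Lieb–Thirring form against a simple localized potential and estimating its bound states variationally) delivers a cleanly computable ratio that yields $\mathcal{K}_4(\mathbb{T}^4)\ge 0.0190$.

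The most delicate step is the upper-bound analysis. Adapting the FHJN21/IL20 framework from two to four dimensions requires evaluating the discrete spectral sum inside $G_\tau(x,x)$ and the ensuing $\tau$-integral with sharp enough constants to close the optimization over the cut-off $\Lambda$: in $d=4$ the tail sums $\sum(\mu+\tau)^{-1}$ diverge at a positive power of $\tau$ rather than only logarithmically as in $d=2$, so the gap between the exact discrete sums on $\mathbb{S}^4$ (respectively on $\mathbb{Z}^4\setminus\{0\}$) and their Weyl/continuum approximations must be tracked with fully explicit constants in order to produce the numerics $0.1728$ and $0.1222$. I expect this quantitative bookkeeping, rather than any isolated hard estimate, to be where the main work of the proof lies.
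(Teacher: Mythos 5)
Your upper-bound strategy is not the paper's and, as formulated, it contains a step that fails in dimension four. You propose to pass to the Birman--Schwinger side, write $\sum|\nu_n|=\int_0^\infty N_-(-\Delta_0+V,-\tau)\,d\tau$, and control the high-momentum part of $V_-^{1/2}(-\Delta_0+\tau)^{-1}V_-^{1/2}$ by its trace or Hilbert--Schmidt norm, with the ``essential analytic input'' being the Green-function diagonal $G_\tau(x,x)=|\mathbb{M}^4|^{-1}\sum_{\mu\neq0}(\mu+\tau)^{-1}$. In $d=4$ this quantity is $+\infty$: by Weyl's law the eigenvalue counting function grows like $\mu^2$ (on $\mathbb{T}^4$ the number of $k\in\mathbb{Z}^4$ with $|k|\sim R$ is of order $R^3$, so $\sum_{k\neq0}(|k|^2+\tau)^{-1}$ diverges; on $\mathbb{S}^4$ the multiplicities $\sim\ell^3$ give the same divergence). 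The Hilbert--Schmidt norm fares no better even after removing finitely many low modes, since $\sum_{\ell>L}\ell^{3}(\ell(\ell+3)+\tau)^{-2}$ diverges logarithmically; $d=4$ is exactly the dimension where the resolvent stops being locally Hilbert--Schmidt, which is why the Birman--Schwinger computations of \cite{I12} stop at $\mathbb{M}^2,\mathbb{M}^3$. To salvage this route you would need Schatten exponents $m\geq3$ (or Cwikel-type bounds), and nothing in your sketch then produces the specific numbers $0.1728$ and $0.1222$; ``integrating in $\tau$ and optimizing in $\Lambda$ should then produce explicit upper bounds'' is precisely the part that is left unproved, and with the $m\geq3$ machinery the constants obtained are generally worse than the ones claimed.

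The paper never introduces the Birman--Schwinger operator: it proves the upper bounds directly in the orthonormal-family form, following \cite{FHJN21,IL20}. One writes $\sum_j\|\nabla_0 u_j\|^2=\int_0^\infty\sum_j\|u_j^E\|^2\,dE$ with $u_j^E$ obtained by multiplying the spectral coefficients by $g(E/\lambda)$, where $g(t)=(1+(\rho t)^3)^{-1}$ is the optimized averaging function of Lemma 5 in \cite{FHJN21}; Bessel's inequality gives the pointwise bound $\sum_j|u_j-u_j^E|^2\leq\frac{1}{\omega_4}\sum_n(1-g(E/\lambda_n))^2\alpha_n$, and this spectral sum is evaluated by Euler--Maclaurin on $\mathbb{S}^4$ and by Poisson summation on $\mathbb{T}^4$, yielding a bound $C_0E^2$ with $C_0$ expressed through $\int_0^\infty t(1+t^3)^{-2}dt=\tfrac13B(\tfrac23,\tfrac43)$; the constants $\sqrt{2B(\frac23,\frac43)}/9$ and $\sqrt{B(\frac23,\frac43)}/9$ come exactly from this computation. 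Your lower bound on $\mathbb{S}^4$ (all spherical harmonics of degree $\leq L$, constant density, Weyl asymptotics) is the paper's argument verbatim; on $\mathbb{T}^4$ the paper likewise tests with the growing family $e^{im\cdot x}/(4\pi^2)$, $0\leq m_k\leq M$, and lets $M\to\infty$, whereas you only assert that some ``small concrete family'' will give $0.0190$ without exhibiting it, so that part too is a claim rather than a proof.
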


For convenience, we will divide the proof of Theorem 1.1 into two part, namely,  Theorem 2.1 and Theorem 3.1.
In section 2,  we mainly prove the Theorem 2.1, namely, to prove the constant $ \mathcal{K}_{4}(\mathbb{S}^{4})$ for Lieb-Thirring inequality on the four-dimensional sphere $\mathbb{S}^{4}$ satisfies (\ref{thmLT02}).
% $$  \frac{3}{8\sqrt{2}\pi} \leq \mathcal{K}_{4}(\mathbb{S}^{4}) \leq \frac{\sqrt{2 B(\frac{2}{3}, \frac{4}{3})}}{9}.$$
In section $3$, by applying the method on the sphere to torus, and combining the Possion summation formula with Bessel's inequality,  we will prove the Theorem 3.1, namely, to prove  the constant $ \mathcal{K}_{4}(\mathbb{T}^{4})$ for Lieb-Thirring  inequality on the four-dimensional torus satisfies (\ref{thmLT03}).
%$$\mathcal{K}_{4}(\mathbb{T}^{4}) \leq   \frac{\sqrt{ B(\frac{2}{3}, \frac{4}{3})}}{9}.$$

\section{Lieb-Thirring Inequality on $\mathbb{S}^{4}$}

In this section, we focus on the constant of Lieb-Thirring inequality on four dimensional sphere $\mathbb{S}^{4}$. We first introduce some notations and known results about the Laplace operator on  $d$-dimensional sphere $\mathbb{S}^{d}$($\mathbb{S}^{d} $ is the unit sphere sphere embedded in Euclidean space $ \mathbb{R}^{d + 1} $). The Laplace operator on sphere is  so-called Laplace-Beltrami operator.
For convenience, we denote the Laplace-Beltrami operator on $\mathbb{S}^{d}$ by $\Delta_{\mathbb{S}^{d}}$.
Follows  \cite{SW72}, Laplace-Beltrami operator $\Delta_{\mathbb{S}^{d}}$ can be written as  the product of two 'gradient operator' on $d$-dimensional sphere $ \nabla_{d} $,
namely, $\Delta_{\mathbb{S}^{d}} = \nabla_{d} \cdot \nabla_{d}.$
From \cite{SW72}, the eigenvalues $\Lambda_{n}$ and the corresponding  eigenfunctions $ Y_{n}^{k} $ of $-\Delta_{\mathbb{S}^{d}}$  satisfy
$$ -\Delta_{\mathbb{S}^{d}}Y_{n}^{k} = \Lambda_{n} Y_{n}^{k},\ n = 0, 1, 2, ... \ , k = 1, 2, ..., \alpha_{n},$$
where $\Lambda_{n} = n( n + d - 1)$, $ \alpha_{n} = C_{d + n}^{n} - C_{d + n - 2}^{n - 2} $ is multiplicity of the eigenvuale $\Lambda_{n},$ $ Y_{n}^{k} $ is the spherical harmonic, and the series  $ \cup_{n = 1}^{\infty}\{ Y_{n}^{k} \}_{k = 1}^{\alpha_{n}} $ is an orthonormal baises of $ L^{2}(\mathbb{S}^{d}).$
Moreover,  for any $x \in \mathbb{S}^{d},$ the sum of squares of the eigenfunctions $Y_{n}^{k}(x)$ satisfies:
$$ \sum_{k = 1}^{\alpha_{n}} (Y_{n}^{k}(x))^{2} = \frac{\alpha_{n}}{\omega_{d}} , $$
where $ \omega_{d} $ is the area of $d$-dimensional unit sphere.

The following theorem is about the Lieb-Thirring inequalities on the four dimension sphere $\mathbb{S}^{4}$, namely, the Theorem $1.1$ for $ \mathbb{S}^{4}$.
For simplicity, we denote the Laplace-Beltrami operator and the corresponding 'gradient operator' on $\mathbb{S}^{4}$ by $-\Delta_{\mathbb{S}^{4}}$ and $ \nabla_{0}$, and $\Delta_{\mathbb{S}^{4}} = \nabla_{0} \cdot \nabla_{0}.$

\begin{thm}\label{thm1}
Let $u_{1}, u_{2}, ..., u_{N} \in H^{1}(\mathbb{S}^{4})\cap \{f | \int_{\mathbb{S}^{4}} f dx = 0, \} $ be a family othornormal  scalar functions in $ L^{2}(\mathbb{S}^{4})$. Then
\begin{equation}\label{thmLT1}
\int_{\mathbb{S}^{4}}\big( \sum_{j = 1}^{N}|u_{j}(x)|^{2}  \big)^{\frac{3}{2}}dx \leq \mathcal{K}_{4}(\mathbb{S}^{4})\sum_{j = 1}^{N}\int_{\mathbb{S}^{4}}|\nabla_{0} u_{j} |^{2}dx,
\end{equation}
where the constant $\mathcal{K}_{4}$ satisfies
\begin{equation}\label{thmLT2}
 \frac{3}{8\sqrt{2}\pi} \leq \mathcal{K}_{4}(\mathbb{S}^{4}) \leq \frac{\sqrt{2 B(\frac{2}{3}, \frac{4}{3})}}{9}.
\end{equation}
\end{thm}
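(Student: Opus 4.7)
My plan is to attack the two inequalities in \eqref{thmLT2} by separate, rather different methods: the upper bound by a direct spectral/duality argument in the spirit of Frank--Hundertmark--Jex--Nam \cite{FHJN21} and Ilyin--Laptev--Zelik \cite{IL20}, and the lower bound by testing \eqref{thmLT1} against an explicit family of spherical harmonics and taking a semi-classical limit.

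For the upper bound I would dualize \eqref{thmLT1} to the one-body form
$ \sum_{\nu_n<0}|\nu_n(-\Delta_{\mathbb{S}^4}+V)| \leq L\int_{\mathbb{S}^4} V_-^{3}\,dx $
and then estimate the negative spectrum of $-\Delta_{\mathbb{S}^4}+V$ via the Birman--Schwinger principle,
$ N_\lambda(-\Delta_{\mathbb{S}^4}+V) \leq \operatorname{Tr}\!\left(V_-^{1/2}(-\Delta_{\mathbb{S}^4}+\lambda)^{-1}V_-^{1/2}\right)^{p} $
for an appropriate exponent $p$, and integrating in $\lambda$. The key step is a momentum (eigenvalue-index) splitting: write the resolvent using the spectral decomposition
$ (-\Delta_{\mathbb{S}^4}+\lambda)^{-1}(x,y)=\sum_{n\geq 1}\frac{1}{\Lambda_n+\lambda}\sum_{k=1}^{\alpha_n}Y_n^k(x)\overline{Y_n^k(y)}, $
split the sum at some cut-off $n\leq M$ versus $n>M$, and bound the low-index block by its trace (using $\sum_k |Y_n^k(x)|^2=\alpha_n/\omega_4$) and the high-index block by a Hilbert--Schmidt or pointwise estimate. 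After an averaging/low-momentum step one is left with a scalar optimization in $M$ and an auxiliary parameter. Because $d=4$ is the critical dimension for which the large-$n$ contributions to $\sum_n \alpha_n(\Lambda_n+\lambda)^{-s}$ converge exactly at the scaling edge, the final integral over $\lambda$ turns into a Beta integral of the form $\int_0^\infty t^{-1/3}(1+t)^{-2}\,dt=B(2/3,4/3)$, which produces the stated constant $\sqrt{2B(2/3,4/3)}/9$; translating back to $\mathcal{K}_4(\mathbb{S}^4)$ via the relation between $\mathcal{L}_{1,4}(\mathbb{S}^4)$ and $\mathcal{K}_4(\mathbb{S}^4)$ stated in the introduction completes the upper estimate.

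For the lower bound I would plug into \eqref{thmLT1} the concrete family $u=Y_n^k$, $n=1,\ldots,n_0$, $k=1,\ldots,\alpha_n$, which is orthonormal in $L^2(\mathbb{S}^4)$ and orthogonal to constants. The pointwise identity $\sum_k(Y_n^k(x))^2=\alpha_n/\omega_4$ reduces the density to the constant $C_{n_0}=\omega_4^{-1}\sum_{n=1}^{n_0}\alpha_n$, so
$ \int_{\mathbb{S}^4}\rho^{3/2} = \omega_4\,C_{n_0}^{3/2}, \qquad \sum \int|\nabla_0 u|^2=\sum_{n=1}^{n_0}\alpha_n\Lambda_n. $
Using the asymptotics $\alpha_n\sim \tfrac{2}{3!}n^3$ and $\Lambda_n=n(n+3)\sim n^2$, the ratio of these two expressions is of the form $n_0^{6}/n_0^{6}$, so it has a finite nonzero limit, and a direct computation with $\omega_4=8\pi^2/3$ identifies the limit as $3/(8\sqrt{2}\pi)$; this forces $\mathcal{K}_4(\mathbb{S}^4)\geq 3/(8\sqrt{2}\pi)$.

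The main obstacle is the upper bound: implementing the Frank--Hundertmark--Jex--Nam momentum decomposition on $\mathbb{S}^4$ requires choosing the right partition of the spherical-harmonic spectrum, the right exponent in the Birman--Schwinger bound, and the right low-momentum averaging so that the constant assembles into the clean Beta-function form above. The constants $\alpha_n$, $\Lambda_n$, and $\omega_4$ must be balanced carefully (as they were on $\mathbb{S}^2$ in \cite{IL20}), and one has to verify that on $\mathbb{S}^4$ the discrete sums can indeed be compared to their continuum counterparts with an error that is absorbable; the lower bound, by contrast, is essentially a routine calculation once the right trial family is chosen.
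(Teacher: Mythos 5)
Your lower bound is fine and is essentially the paper's own argument: both test \eqref{thmLT1} on the family $\{Y_n^k\}$, $1\le n\le M$, $1\le k\le\alpha_n$, exploit $\sum_k (Y_n^k(x))^2=\alpha_n/\omega_4$ (the paper runs this through H\"older's inequality, which is an equality here because the density is constant), and compare $\sum_{n\le M}\alpha_n\sim M^4/12$ with $\sum_{n\le M}\alpha_n\Lambda_n\sim M^6/18$ as $M\to\infty$; with $\omega_4=8\pi^2/3$ this reproduces $3/(8\sqrt2\pi)$ exactly as you state.

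The upper bound is where there is a genuine gap. The paper never dualizes and never invokes the Birman--Schwinger principle: it works directly with the orthonormal family, writing $\int_{\mathbb{S}^4}|\nabla_0 u_j|^2dx=\int_0^\infty\int_{\mathbb{S}^4}|u_j^E|^2dx\,dE$ via the normalization $\Lambda_n=\int_0^\infty g(E/\Lambda_n)^2dE$ with $u_j^E=\sum_n g(E/\Lambda_n)\sum_k u_{j,n}^kY_n^k$; then Bessel's inequality against the kernel $\Psi^E(x',x)=\sum_n\bigl(1-g(E/\Lambda_n)\bigr)\sum_kY_n^k(x')Y_n^k(x)$, together with the addition formula $\sum_k(Y_n^k(x))^2=\alpha_n/\omega_4$, the explicit choice $g(t)=(1+(\rho t)^3)^{-1}$, $\rho=4\pi/(9\sqrt3)$ from Lemma 5 of \cite{FHJN21}, and a discrete-to-continuum comparison of $\sum_n(2n+3)G(\nu n(n+3))$ with $\int_0^\infty G(t)\,dt=B(\tfrac23,\tfrac43)/3$ (Euler--Maclaurin for large $E$, a separate direct estimate on a bounded $E$-interval) yields $\sum_j|u_j(x)-u_j^E(x)|^2\le C_0E^2$ with $C_0=B(\tfrac23,\tfrac43)\rho^2/(9\omega_4)$; completing the square and integrating in $E$ then gives the constant $\sqrt{2B(\tfrac23,\tfrac43)}/9$. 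Your plan instead routes through the one-body eigenvalue-sum form, a Birman--Schwinger bound with an unspecified power $p$, an unspecified spectral cut-off, and an unspecified ``averaging/low-momentum step,'' and you yourself flag that choosing the partition, the exponent and the averaging is the unresolved issue. Nothing in the sketch shows that this machinery assembles into the stated constant rather than a worse one: trace or Hilbert--Schmidt bounds on the Birman--Schwinger operator generically lose constants, and passing back through the duality relation between $\mathcal{L}_{1,4}(\mathbb{S}^4)$ and $\mathcal{K}_4(\mathbb{S}^4)$ preserves sharpness only if the one-body estimate already carries the FHJN-quality constant. So although the Beta integral you anticipate is indeed the one that appears, the key estimate producing $\sqrt{2B(\tfrac23,\tfrac43)}/9$ is not established in your proposal; to close it you would either have to carry out the Birman--Schwinger program quantitatively or, more simply, run the FHJN splitting directly in the orthonormal-family form as the paper does.
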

\begin{proof}
First, we shall prove that the constant $\mathcal{K}_{4}(\mathbb{S}^{4})$ in the inequality (\ref{thmLT1}) satisfies the left side of inequality (\ref{thmLT2}), namely,
$$ \mathcal{K}_{4}(\mathbb{S}^{4}) \geq \frac{3}{8\sqrt{2}\pi}. $$
Let the orthonormal series $ \{u_{j}\} $ in this theorem be the spherical harmonic series $ \{Y_{n}^{k}\} $ on $\mathbb{S}^{4}$ ($ d = 4 $) where $n = 1, 2, ..., M-1,$ and  $k = 1, ...,\alpha_{n}.$
Based on the properties of spherical harmonic series $ \{Y_{n}^{k}\} $, we obtain
\begin{eqnarray}\label{S22}
\begin{split}
\int_{\mathbb{S}^{4}} \sum_{n = 1}^{M - 1}\sum_{k = 1}^{\alpha_{n}} (Y_{n}^{k}(x))^{2} dx &= \sum_{n = 1}^{M - 1}\sum_{k = 1}^{\alpha_{n}} \int_{\mathbb{S}^{4}} (Y_{n}^{k}(x))^{2} dx\\
&= \sum_{n = 1}^{M - 1} \big(C_{n + 4}^{n} - C_{n + 2}^{2} \big) \\
&= \sum_{n = 1}^{M - 1} \frac{(2n + 3)(n + 2)(n + 1)}{3!} \\
&\doteq \frac{1}{6}P(M),
\end{split}
\end{eqnarray}
where $P(M) = \frac{1}{2}M^{4} + C_{1} M^{3} + C_{2} M^{2} + C_{3}M + C_{4} $ and $C_{i}$($i = 1, 2, 3, 4$) are finite positive constant.
Applying H\"{o}lder inequality to the left side of (\ref{S22}), so that
\begin{eqnarray}\label{S23}
\begin{split}
\int_{\mathbb{S}^{4}} \sum_{n = 1}^{M - 1}\sum_{k = 1}^{\alpha_{n}} (Y_{n}^{k}(x))^{2} dx \leq \omega_{4}^{\frac{1}{3}} \bigg(\int_{\mathbb{S}^{4}} \big( \sum_{n = 1}^{M - 1}\sum_{k = 1}^{\alpha_{n}} (Y_{n}^{k}(x))^{2}\big)^{\frac{3}{2}} dx\bigg)^{\frac{2}{3}}.
\end{split}
\end{eqnarray}
Since $ \{Y_{n}^{k}\} $ is an orthonormal series, and inserting the series $ \{Y_{n}^{k}\} $ into the inequality (\ref{thmLT1}),  which gives
\begin{eqnarray}\label{S24}
\begin{split}
\int_{\mathbb{S}^{4}} \big( \sum_{n = 1}^{M - 1}\sum_{k = 1}^{\alpha_{n}} (Y_{n}^{k}(x))^{2}\big)^{\frac{3}{2}} dx &\leq \mathcal{K}_{4}(\mathbb{S}^{4}) \sum_{n = 1}^{M - 1}\sum_{k = 1}^{\alpha_{n}} \int_{\mathbb{S}^{4}}  | \nabla_{0}Y_{n}^{k}(x)|^{2} dx \\
&= \mathcal{K}_{4}(\mathbb{S}^{4}) \sum_{n = 1}^{M - 1}\sum_{k = 1}^{\alpha_{n}} \int_{\mathbb{S}^{4}}\big(-\Delta_{\mathbb{S}^{4}}Y_{n}^{k}(x)\big) Y_{n}^{k}(x) dx\\
&= \mathcal{K}_{4}(\mathbb{S}^{4}) \sum_{n = 1}^{M - 1}\sum_{k = 1}^{\alpha_{n}} \int_{\mathbb{S}^{4}}n(n + 3)  \big(Y_{n}^{k}(x)\big)^{2} dx \\
&= \mathcal{K}_{4}(\mathbb{S}^{4})\sum_{n = 1}^{M - 1} n(n + 3)\big( C_{n + 4}^{n} - C_{n + 2}^{2} \big) \\
&= \mathcal{K}_{4}(\mathbb{S}^{4})\sum_{n = 1}^{M - 1} \frac{ (2n  + 3)(n + 3)(n + 2)(n + 1)n }{3!}\\
&\doteq \frac{\mathcal{K}_{4}(\mathbb{S}^{4})}{6} Q(M),
\end{split}
\end{eqnarray}
where $Q(M) = \frac{1}{3} M^{6} + C_{1}M^{5} + C_{2}M^{4} + C_{3}M^{3} + C_{4}M^{2} + C_{5}M + C_{6}$ and $ C_{i}$($i = 1,...,6$) are finite positive constant.
Combining (\ref{S24}), (\ref{S22}) with (\ref{S23}), we have
\begin{eqnarray}\label{S25}
\begin{split}
\frac{1}{6}P(M) &= \int_{\mathbb{S}^{4}} \sum_{n = 1}^{M - 1}\sum_{k = 1}^{\alpha_{n}} (Y_{n}^{k}(x))^{2} dx\\
&\leq \omega_{4}^{\frac{1}{3}} \bigg(\int_{\mathbb{S}^{4}} \big( \sum_{n = 1}^{M - 1}\sum_{k = 1}^{\alpha_{n}} (Y_{n}^{k}(x))^{2}\big)^{\frac{3}{2}} dx\bigg)^{\frac{2}{3}}\\
&= \omega_{4}^{\frac{1}{3}} \big(  \frac{\mathcal{K}_{4}(\mathbb{S}^{4})}{6} Q(M)  \big)^{\frac{2}{3}}.
\end{split}
\end{eqnarray}
Cubing both sides of the inequality (\ref{S25}), so that
\begin{equation}\label{S26}
 \big( \frac{P(M)}{6} \big)^{3} \leq \omega_{4}\big(  \frac{\mathcal{K}_{4}(\mathbb{S}^{4})}{6} Q(M)  \big)^{2}.
\end{equation}
Then, dividing both sides of the inequality (\ref{S26}) by $ \omega_{4}\big(  \frac{Q(M)}{6} \big)^{2},$ and let $M$ tends to $\infty$, we have
\begin{eqnarray}\label{S27}
\begin{split}
\mathcal{K}_{4}^{2}(\mathbb{S}^{4}) \geq \frac{ P(M)^{3} }{ 6 \omega_{4} Q(M)^{2} }
= \frac{9}{128\pi^{2}} (M \rightarrow \infty).
\end{split}
\end{eqnarray}
Thus, we obtain the lower bounded of the constant $ \mathcal{K}_{4}(\mathbb{S}^{4})$
$$ \mathcal{K}_{4}(\mathbb{S}^{4}) \geq \frac{3}{8\sqrt{2}\pi} = 0.0844.$$

Next, we turn to prove the right side of (\ref{thmLT2}).
Let $g$ be a smooth nonnegative function and satisfies its square integral  on $\mathbb{R}^{+}$ equals to $1$, namely,
\begin{equation}\label{S21}
\int_{0}^{+\infty}(g(t))^{2}dt = 1.
\end{equation}
In fact, we know that the eigenfunction series $\{ Y_{n}^{k} \}_{n, k}$ of $ -\Delta_{\mathbb{S}^{4}} $ is a othornormal basis of  $ H^{1}(\mathbb{S}^{4}),$
\begin{eqnarray}\label{S29}
\int_{\mathbb{S}^{4}} Y_{n}^{k} Y_{m}^{l} dx =
\left\{ \begin{aligned}
1&  \ \ \  \text{when}\ n = m\ \text{and}\ k = l,\\
0&  \ \ \  \text{otherwise.}
\end{aligned}
\right.
\end{eqnarray}
Therefore,  for any function $ u(x) \in \{u | \int_{\mathbb{S}^{d}} u dx = 0, u \in H^{1}(\mathbb{S}^{d})  \} $, which can be expanded in the follows:
$$ u_{n}^{k} = \int_{\mathbb{S}^{4}} u(x) Y_{n}^{k}(x)dx = \big(u(x), Y_{n}^{k}(x) \big), \ \  u(x) = \sum_{n = 1}^{\infty} \sum_{k = 1}^{\alpha_{n}} u_{n}^{k} Y_{n}^{k}(x). $$
Based on the above expanded expression of $u(x)$ and (\ref{S21}), we have
\begin{eqnarray}\label{S28}
\begin{split}
\int_{\mathbb{S}^{4}}|\nabla_{0} u(x) |^{2}dx &= \int_{\mathbb{S}^{4}}\big(-\Delta_{\mathbb{S}^{4}} \sum_{n = 1}^{\infty} \sum_{k = 1}^{\alpha_{n}} u_{n}^{k} Y_{n}^{k}(x)  \big)\big( \sum_{n = 1}^{\infty} \sum_{k = 1}^{\alpha_{n}} u_{n}^{k} Y_{n}^{k}(x) \big)dx \\
&= \int_{\mathbb{S}^{4}}\big( \sum_{n = 1}^{\infty} n(n + 3) \sum_{k = 1}^{\alpha_{n}} u_{n}^{k} Y_{n}^{k}(x)  \big)\big( \sum_{n = 1}^{\infty} \sum_{k = 1}^{\alpha_{n}} u_{n}^{k} Y_{n}^{k}(x) \big)dx \\
&= \int_{\mathbb{S}^{4}}\big( \sum_{n = 1}^{\infty} \int_{0}^{\infty}g( \frac{E}{n(n + 3)})^{2}dE \sum_{k = 1}^{\alpha_{n}} u_{n}^{k} Y_{n}^{k}(x)  \big)\big( \sum_{n = 1}^{\infty} \sum_{k = 1}^{\alpha_{n}} u_{n}^{k} Y_{n}^{k}(x) \big)dx\\
&= \int_{\mathbb{S}^{4}}\int_{0}^{\infty}\big( \sum_{n = 1}^{\infty} g( \frac{E}{n(n + 3)}) \sum_{k = 1}^{\alpha_{n}} u_{n}^{k} Y_{n}^{k}(x)  \big)^{2} dE dx\\
&\doteq \int_{\mathbb{S}^{4}}\int_{0}^{\infty} |u^{E}(x)|^{2}dE dx,
\end{split}
\end{eqnarray}
where
$$ u^{E}(x) = \sum_{n = 1}^{\infty} g( \frac{E}{n(n + 3)}) \sum_{k = 1}^{\alpha_{n}} u_{n}^{k} Y_{n}^{k}(x).  $$
Similarly, for any $u_{j}$ in the sequence $\{ u_{j} \}_{j = 1}^{N}$, we can introduce the following  expression:
\begin{eqnarray}\label{S211}
u_{j}^{E}(x) = \sum_{n = 1}^{\infty} g( \frac{E}{n(n + 3)}) \sum_{k = 1}^{\alpha_{n}} u_{j,n}^{k} Y_{n}^{k}(x),
\end{eqnarray}
where
$$ u_{j,n}^{k} = \int_{\mathbb{S}^{4}} u_{j}(x)Y_{n}^{k}(x) dx = \big( u_{j}(x), Y_{n}^{k}(x)  \big), $$
and for any $u_{j}$ in $ \{ u_{j} \}_{j = 1}^{N} $,
\begin{eqnarray}\label{S212}
u_{j}(x) = \sum_{n = 1}^{\infty}\sum_{k = 1}^{\alpha_{n}} u_{j,n}^{k} Y_{n}^{k}(x).
\end{eqnarray}
We now turn to $ \sum_{j = 1}^{N}|u_{j}(x)|^{2} $, for any $ \epsilon > 0 $, and combine with Cauchy inequality, which yields
\begin{eqnarray}\label{S210}
\begin{split}
\sum_{j = 1}^{N}|u_{j}(x)|^{2} &= \sum_{j = 1}^{N}|u_{j}(x) - u_{j}^{E}(x) + u_{j}^{E}(x) |^{2}\\
&= \sum_{j = 1}^{N}|u_{j}^{E}(x)|^{2} + 2\sum_{j = 1}^{N}u_{j}^{E}(x)\big( u_{j}(x) - u_{j}^{E}(x)  \big) + \sum_{j = 1}^{N}| u_{j}(x) - u_{j}^{E}(x) |^{2}\\
&\leq (1 + \epsilon) \sum_{j = 1}^{N}|u_{j}^{E}(x)|^{2} + (1 + \frac{1}{\epsilon})\sum_{j = 1}^{N}| u_{j}(x) - u_{j}^{E}(x) |^{2}.
\end{split}
\end{eqnarray}
Applying (\ref{S211}) and (\ref{S212}) to the last term of the right side of (\ref{S210}),
so that
\begin{eqnarray}\label{S213}
\begin{split}
\sum_{j = 1}^{N}| u_{j}(x) - u_{j}^{E}(x) |^{2} &= \sum_{j = 1}^{N}\mid \sum_{n = 1}^{\infty}\big(1 -  g( \frac{E}{n(n + 3)}) \big)\sum_{k = 1}^{\alpha_{n}} u_{j,n}^{k} Y_{n}^{k}(x) \mid^{2}\\
&= \sum_{j = 1}^{N}\mid \sum_{n = 1}^{\infty}\big(1 -  g( \frac{E}{n(n + 3)}) \big)\sum_{k = 1}^{\alpha_{n}} \big( u_{j}(x'), Y_{n}^{k}(x')  \big) Y_{n}^{k}(x) \mid^{2}\\
&= \sum_{j = 1}^{N}\mid \bigg( u_{j}(x'),  \sum_{n = 1}^{\infty}\big(1 -  g( \frac{E}{n(n + 3)}) \big)\sum_{k = 1}^{\alpha_{n}} Y_{n}^{k}(x')Y_{n}^{k}(x)  \bigg)  \mid^{2}\\
&\doteq \sum_{j = 1}^{N}\mid \bigg( u_{j}(x'), \Psi^{E}(x', x)  \bigg) \mid^{2},
\end{split}
\end{eqnarray}
where
$$ \Psi^{E}(x', x) = \sum_{n = 1}^{\infty}\big(1 -  g( \frac{E}{n(n + 3)}) \big)\sum_{k = 1}^{\alpha_{n}} Y_{n}^{k}(x')Y_{n}^{k}(x). $$
Combining with the Bessel's inequality, we have the following estimate for (\ref{S213})
\begin{eqnarray}\label{S214}
\begin{split}
\sum_{j = 1}^{N}| u_{j}(x) - u_{j}^{E}(x) |^{2} & = \sum_{j = 1}^{N}\mid \bigg( u_{j}(x'), \Psi^{E}(x', x)  \bigg) \mid^{2} \\
&\leq  \int_{\mathbb{S}^{4}} \big( \Psi^{E}(x', x)  \big)^{2} d x'\\
&= \sum_{n = 1}^{\infty}\big(1 -  g( \frac{E}{n(n + 3)}) \big)^{2}\sum_{k = 1}^{\alpha_{n}} \big(Y_{n}^{k}(x)\big)^{2} \\
&= \sum_{n = 1}^{\infty}\big(1 -  g( \frac{E}{n(n + 3)}) \big)^{2}\frac{\alpha_{n}}{\omega_{4}}\\
&= \frac{1}{6\omega_{4}}\sum_{n = 1}^{\infty}\big(1 -  g( \frac{E}{n(n + 3)}) \big)^{2}(2n + 3)(n(n + 3) + 2).
\end{split}
\end{eqnarray}
The next main work is to estimate the right side of (\ref{S214}), which is a key point in the our proof.
Before that, we need a determined function $g$ under the condition
$$ \int_{0}^{+\infty}(g(t))^{2}dt = 1. $$
According to the Lemma $ 5 $ in \cite{FHJN21}, we can obtain the expression of $g$
\begin{eqnarray}\label{S215}
g(t) = \frac{1}{1 + {(\rho t)}^{3}},
\end{eqnarray}
where $ \rho  =   \frac{4\pi}{9\sqrt{3}}.$
Denote $ \nu = \frac{1}{\rho E} $ and let $ G(t) = \frac{t}{(1 + t^{3})^{2}}.$
Then, we obtain the estimate for the  the right side of (\ref{S214}),
\begin{eqnarray}\label{S216}
\begin{split}
\sum_{n = 1}^{\infty}\big(1 -  g( \frac{E}{n(n + 3)}) \big)^{2}&(2n + 3)\big(n(n + 3) + 2\big)\\
 &= \sum_{n = 1}^{\infty} \frac{(2n + 3)\big(n(n + 3) +2\big)}{\Big(  1 + \big(  \frac{n(n + 3)}{\rho E}  \big)^{3} \Big)^{2}}\\
&\leq \sum_{n = 1}^{\infty} \frac{(2n + 3)\big(2n(n + 3)\big)}{\Big(  1 + \big(  \frac{n(n + 3)}{\rho E}  \big)^{3}\Big)^{2}}\\
&= \sum_{n = 1}^{\infty}\frac{2(2n + 3)G\big(\nu n(n + 3)\big)}{\nu}\\
&\doteq \sum_{n = 1}^{\infty}\frac{2R(n)}{\nu},
\end{split}
\end{eqnarray}
where $$ R(n) = (2n + 3)G\big(\nu n(n + 3)\big),$$and
$$ R(x) = (2x + 3)G\big(\nu x(x + 3)\big),$$ with $R(0) = 0 $,
and $R(x)$ is differentiable  respect to $x$.
By the Euler-Maclaurin formula,
\begin{eqnarray*}\label{S217}
\sum_{n = 0}^{\infty}R(n) = \int_{0}^{\infty}R(x)dx + \frac{1}{2}R(0) - \sum_{j = 2}^{k - 1}\frac{B_{j}}{j!}R^{(j - 1)}(0) - \int_{0}^{\infty}\frac{B_{k}(x)}{k!}R^{k}(x) dx,
\end{eqnarray*}
where $B_{j}$ are the Bernoulli numbers: $ B_{2} = \frac{1}{6}, \ B_{3} = 0, \ B_{3} =- \frac{1}{30}, B_{5} = 0, B_{6} = \frac{1}{42},...,$
and $ B_{k}(x) $ are the periodic Bernoulli polynomials. Since $ G'(0) = 1, G''(0) = G'''(0) = G^{(4)}(0) = G^{(5)}(0) = 0,$ we have
$ R'(0) = 9 \nu,\ R''(0) = 18 \nu, \ R'''(0) = 12 \nu,\ R^{(4)}(0) = R^{(5)}(0) = 0 .$
Taking $k = 6$ and $ \nu \rightarrow 0 ( E \rightarrow 0), $ we have
\begin{eqnarray}\label{S218}
\begin{split}
\sum_{n = 0}^{\infty}R(n) &= \int_{0}^{\infty}R(x)dx - \frac{11}{25} \nu + O(\nu^{2})\ \ \  (\text{as}\ \nu \rightarrow 0 )\\
&\leq  \int_{0}^{\infty}R(x)dx = \int_{0}^{\infty} (2x + 3)G\big(\nu x(x + 3)\big)dx\\
&= \frac{1}{\nu} \int_{0}^{\infty}G(t)dt\\
&= \frac{B(\frac{2}{3}, \frac{4}{3})}{3 \nu},
\end{split}
\end{eqnarray}
where $B(\frac{2}{3}, \frac{4}{3})$ is Beta function.
Therefore, there exists sufficiently large $ E_{0} $ ($ 0 < E_{0} < \infty $) such that
\begin{eqnarray}\label{S219}
 \sum_{n = 1}^{\infty} \frac{(2n + 3)\big(2n(n + 3)\big)}{\Big(  1 + \big(  \frac{n(n + 3)}{\rho E}  \big)^{3}\Big)^{2}} \leq \frac{2\rho^{2}E^{2}B(\frac{2}{3}, \frac{4}{3})}{3}
\end{eqnarray}
holds for any $ E \geq E_{0} $.
On the other hand, on  the finite interval $E \in [0, E_{0} ],$ by the definition of definite integral,
\begin{eqnarray}\label{S2161}
\begin{split}
\sum_{n = 1}^{\infty} \frac{(2n + 3)\big(n(n + 3) +2\big)}{\Big(  1 + \big(  \frac{n(n + 3)}{\rho E}  \big)^{3} \Big)^{2}}
&\leq \frac{(2 + 3)\big((1 + 3) + 2\big)}{\Big(  1 + \big(  \frac{1(1 + 3)}{\rho E}  \big)^{3}\Big)^{2}} + \sum_{n = 2}^{\infty} \frac{(2n + 3)\big(2n(n + 3)\big)}{\Big(  1 + \big(  \frac{n(n + 3)}{\rho E}  \big)^{3}\Big)^{2}}\\
&= \frac{30}{\big( 1 + \frac{64}{\rho^{3} E^{3}} \big)^{2}} + \sum_{n = 2}^{\infty} \frac{(4n + 6)\big(n(n + 3)\big)}{\Big(  1 + \big(  \frac{n(n + 3)}{\rho E}  \big)^{3}\Big)^{2}} \\
%&\leq \frac{30}{\big( 1 + \frac{64}{\rho^{3} E^{3}} \big)^{2}} + \sum_{n = 2}^{\infty} \frac{2\big( n(n + 3) - (n - 1)(n + 2) \big) \big(n(n + 3)\big)}{\Big(  1 + \big(  \frac{n(n + 3)}{\rho E}  \big)^{3}\Big)^{2}} \\
&\leq \frac{30}{\big( 1 + \frac{64}{\rho^{3} E^{3}} \big)^{2}} + \sum_{n = 2}^{\infty} \frac{2( x_{n} - x_{n-1} ) x_{n}}{\Big(  1 + \big(  \frac{x_{n}}{\rho E}  \big)^{3}\Big)^{2}}\\
&\leq \frac{30}{\big( 1 + \frac{64}{\rho^{3} E^{3}} \big)^{2}} + 2 \rho^{2}E^{2}\int_{\frac{4}{\rho E}}^{\infty}\frac{x}{(1 + x^{3})^{2}} dx\\
&\doteq 2\rho^{2}E^{2}\int_{0}^{\infty}\frac{x}{(1 + x^{3})^{2}} dx + \delta(E),
\end{split}
\end{eqnarray}
where $$ x_{n} = n(n + 3), \ n = 1, 2, ...,  $$ and
$$  \delta(E) = \frac{30}{\big( 1 + \frac{64}{\rho^{3} E^{3}} \big)^{2}} - 2 \rho^{2}E^{2}\int_{0}^{\frac{4}{\rho E}}\frac{x}{(1 + x^{3})^{2}}.$$
By numerical calculation, for any $E \in [0, E_{0}]$, we obtain $ \delta(E) \leq 0. $ Then, combining with $\ref{S2161}$, the following inequality
\begin{eqnarray}\label{S220}
\frac{3(\rho E)^{4}}{2B(\frac{2}{3}, \frac{4}{3})} \sum_{n = 1}^{\infty} \frac{(2n + 3)\big( n(n + 3) + 2 \big)}{\big(  (\rho E)^{3} + \big( n(n + 3) \big)^{3}  \big)^{2}}\ \  \leq\ \  1
\end{eqnarray}
holds for any $E$ in interval $ [0, E_{0} ].$
Moreover, applying inequality  (\ref{S2161}) and (\ref{S216}) to (\ref{S214}), and combining (\ref{S220}) with (\ref{S219}), which gives
\begin{eqnarray}\label{S221}
\begin{split}
 \sum_{j = 1}^{N}| u_{j}(x) - u_{j}^{E}(x) |^{2} &\leq \frac{1}{6\omega_{4}}\sum_{n = 1}^{\infty} \frac{(2n + 3)\big(2n(n + 3)\big)}{\Big(  1 + \big(  \frac{n(n + 3)}{\rho E}  \big)^{3}\Big)^{2}}\\
 &\leq \frac{B(\frac{2}{3}, \frac{4}{3})\rho^{2}E^{2}}{9\omega_{4}}\\
 &\doteq C_{0}E^{2},
\end{split}
\end{eqnarray}
where $ C_{0} = \frac{B(\frac{2}{3}, \frac{4}{3})\rho^{2}}{9\omega_{4}}.$
Inserting (\ref{S221}) into   (\ref{S210}), and optimizing it with respect to $\epsilon$, we obtain
\begin{eqnarray}\label{S222}
\begin{split}
\sum_{j = 1}^{N}|u_{j}(x)|^{2} &\leq (1 + \epsilon) \sum_{j = 1}^{N}|u_{j}^{E}(x)|^{2} + (1 + \frac{1}{\epsilon})C_{0}E^{2}\\
&\leq \bigg(\sqrt{\sum_{j = 1}^{N}|u_{j}^{E}(x)|^{2}} + \sqrt{C_{0}E^{2}}\bigg)^{2}.
\end{split}
\end{eqnarray}
% we have
%\begin{eqnarray}\label{S223}
%\sum_{j = 1}^{N}|u_{j}(x)|^{2} \leq
%\end{eqnarray}
Solving $ \sum_{j = 1}^{N}|u_{j}^{E}(x)|^{2} $ from inequality (\ref{S222}), which yields
\begin{eqnarray}\label{S224}
\sum_{j = 1}^{N}|u_{j}^{E}(x)|^{2} \geq \bigg(  \sqrt{ \sum_{j = 1}^{N}|u_{j}(x)|^{2} }  - \sqrt{C_{0}E^{2}} \bigg)_{+}^{2}.
\end{eqnarray}
Equation (\ref{S28}) implies that
\begin{eqnarray}\label{S225}
\begin{split}
\sum_{j = 1}^{N}\int_{\mathbb{S}^{4}}|\nabla_{0} u_{j}(x) |^{2}dx = \int_{\mathbb{S}^{4}}\int_{0}^{\infty} \sum_{j = 1}^{N}|u_{j}^{E}(x)|^{2} dE dx.
\end{split}
\end{eqnarray}
Denote $ \phi(x) = \sum_{j = 1}^{N}|u_{j}(x)|^{2} ,$ and
inserting (\ref{S224}) into (\ref{S225}), we obtain
\begin{eqnarray*}
\begin{split}
\sum_{j = 1}^{N}\int_{\mathbb{S}^{4}}|\nabla_{0} u_{j}(x) |^{2}dx &\geq \int_{\mathbb{S}^{4}}\int_{0}^{\infty}  \bigg(  \sqrt{ \sum_{j = 1}^{N}|u_{i}(x)|^{2} }  - \sqrt{C_{0}E^{2}} \bigg)_{+}^{2} dE dx \\
&= \int_{\mathbb{S}^{4}}\int_{0}^{\sqrt{\frac{\phi(x)}{C_{0}}}}  \big(   \phi(x)   - 2\sqrt{\phi(x)C_{0}E^{2}} + C_{0}E^{2} \big) dE dx \\
&= \frac{1}{3\sqrt{C_{0}}}\int_{\mathbb{S}^{4}} \big( \phi(x)\big)^{ 1 + \frac{2}{4}} dx \\
&= \frac{1}{3\sqrt{\frac{B(\frac{2}{3}, \frac{4}{3})\rho^{2}}{9\omega_{4}}}}\int_{\mathbb{S}^{4}} \big( \sum_{j = 1}^{N}|u_{j}(x)|^{2}\big)^{ \frac{3}{2}} dx,
\end{split}
\end{eqnarray*}
where
$$ \omega_{4} = \frac{8 \pi^{2}}{3}, \ \ \ \   \rho = \frac{4\pi}{9\sqrt{3}}.$$
Thus,
$$ \sum_{j = 1}^{N}\int_{\mathbb{S}^{4}}|\nabla_{0} u_{j}(x) |^{2}dx \geq \frac{9}{\sqrt{2B(\frac{2}{3}, \frac{4}{3})}} \int_{\mathbb{S}^{4}} \big( \sum_{j = 1}^{N}|u_{j}(x)|^{2}\big)^{ \frac{3}{2}}dx, $$
namely,
$$ \mathcal{K}_{4}(\mathbb{S}^{4}) \leq \frac{\sqrt{2 B(\frac{2}{3}, \frac{4}{3})}}{9} = 0.1728.$$
This complete the proof the theorem.
\end{proof}

\section{Lieb-Thirring Inequality on $ \mathbb{T}^{4} $}
In this section, we turn to study the constant of Lieb-Thirring inequality on $ \mathbb{T}^{4}.$
Without loss of generlity, let $ \mathbb{T}^{4} = [0, 2\pi]^{4}.$ First, let's introduce some useful known conclusions on torus \cite{L14}.
The series $ \{ 4\pi^{-2}e^{im\cdot x} \}_{m}, m\in \mathbb{Z}_{0}^{4} = \mathbb{Z}^{4}\backslash 0$ is an orthonormal basis in $L^{2}(\mathbb{T}^{4})$, where $ \mathbb{Z} $ denotes the set of non-negative integers.
In the reminder of this section, $ \nabla $ denotes the gradient operator on the Euclidean space.
The following theorem is about the Lieb-Thirring inequalities on the four-dimensional torus $\mathbb{S}^{4}$, namely, the Theorem $1.1$ for $ \mathbb{T}^{4}$.
\begin{thm}\label{thm2}
Let $u_{1}, u_{2}, ..., u_{N} \in \{u | \int_{\mathbb{T}^{4}} u dx = 0, u \in H^{1}(\mathbb{T}^{4})  \} $ be othornormal in $ L^{2}(\mathbb{T}^{d}) $. Then
\begin{equation}\label{thm2LT}
\int_{\mathbb{T}^{4}}\big( \sum_{j = 1}^{N}|u_{j}(x)|^{2}  \big)^{\frac{3}{2}}dx \leq \mathcal{K}_{4}(\mathbb{T}^{4}) \sum_{i = 1}^{N}\int_{\mathbb{T}^{4}}|\nabla u_{j} |^{2}dx,
\end{equation}
where the constant $ \mathcal{K}_{4}(\mathbb{T}^{4}) $ satisfies
\begin{eqnarray}\label{thm2LT1}
0.0190 \leq \mathcal{K}_{4}(\mathbb{T}^{4}) \leq 0.1222.
\end{eqnarray}
\end{thm}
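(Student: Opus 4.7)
The argument splits into a lower bound produced by an explicit orthonormal family and an upper bound that mirrors the proof of Theorem~\ref{thm1}, with Poisson summation replacing the Euler-Maclaurin estimate used there.

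For the lower bound I would test (\ref{thm2LT}) against the orthonormal set $u_m(x) = (2\pi)^{-2}e^{i m\cdot x}$, with $m$ ranging over the positive octant $\{0,1,\dots,M\}^4\setminus\{0\}$ of $\mathbb{Z}^4$. Because $\sum_m |u_m(x)|^2$ is constant in $x$, both sides of (\ref{thm2LT}) can be computed in closed form: the left-hand side equals $[(M+1)^4 - 1]^{3/2}/(2\pi)^2$, while the right-hand side is $\mathcal{K}_4(\mathbb{T}^4)\sum_m |m|^2$, with the lattice sum factorising as $\tfrac{2}{3}M(M+1)^4(2M+1)$. Dividing and letting $M \to \infty$ yields $\mathcal{K}_4(\mathbb{T}^4) \geq 3/(16\pi^2) \approx 0.0190$. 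Restricting to a single octant of $\mathbb{Z}^4$ is a deliberate choice that keeps the combinatorics transparent; a spherical shell would give a slightly better constant but at the cost of more delicate lattice-point asymptotics.

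For the upper bound I would repeat the sphere argument with the Fourier basis $\{(2\pi)^{-2}e^{im\cdot x}\}_{m \in \mathbb{Z}^4\setminus\{0\}}$ playing the role of the spherical harmonics and $|m|^2$ playing the role of $n(n+3)$. With the same profile $g(t) = 1/(1+(\rho t)^3)$, $\rho = 4\pi/(9\sqrt 3)$, I set $u_j^E(x) = \sum_{m\neq 0} g(E/|m|^2)\hat{u}_{j,m}(2\pi)^{-2}e^{im\cdot x}$. The layer-cake identity $\sum_j \int|\nabla u_j|^2\,dx = \int_{\mathbb{T}^4}\!\int_0^\infty \sum_j|u_j^E(x)|^2\,dE\,dx$ and Bessel's inequality applied to the reproducing kernel $\Psi^E(x',x) = (2\pi)^{-4}\sum_{m\neq 0}(1 - g(E/|m|^2))e^{im\cdot(x'-x)}$ together yield the pointwise bound $\sum_j|u_j(x) - u_j^E(x)|^2 \leq (2\pi)^{-4} S(E)$, where $S(E) = \sum_{m\neq 0}(1 - g(E/|m|^2))^2$.

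The heart of the proof, and the main obstacle, is showing that $S(E) \leq \pi^2 \rho^2 E^2 B(2/3,4/3)/3$ uniformly in $E$. Here I would apply Poisson summation to $F_E(x) = (1 - g(E/|x|^2))^2$ on $\mathbb{R}^4$, reducing the task to proving $\sum_{k\neq 0}\widehat{F_E}(k) \leq 1$. The zero mode $\widehat{F_E}(0) = \int_{\mathbb{R}^4}F_E\,dx$ is computed in radial coordinates via the substitution $v = \rho E/r^2$, collapsing to the Beta function and producing exactly the target value $\pi^2\rho^2 E^2 B(2/3,4/3)/3$. The non-zero modes are delicate: for large $E$ the Bessel-type Fourier transform of the radial $F_E$ decays rapidly in $|k|$, so $\sum_{k\neq 0}\widehat{F_E}(k)$ is of lower order, while on a finite initial range $[0,E_0]$ the bound can be verified by direct comparison of $S(E)$ with a tail integral, in the spirit of the $\delta(E)$ estimate from the sphere proof. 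Once $S(E)$ is so controlled, setting $C_0 = \rho^2 B(2/3,4/3)/(48\pi^2)$ and repeating the Cauchy-type split of $\sum_j|u_j|^2$, the optimisation over $\varepsilon$, and the $dE\,dx$ integration from Theorem~\ref{thm1} verbatim produces $\mathcal{K}_4(\mathbb{T}^4) \leq 3\sqrt{C_0} = \sqrt{B(2/3,4/3)}/9 \approx 0.1222$.
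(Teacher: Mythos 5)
Your proposal follows essentially the same route as the paper: the same Fourier family on the cube $\{0,\dots,M\}^4$ for the lower bound $3/(16\pi^{2})$, and for the upper bound the same profile $g$, Bessel inequality for the kernel, Poisson summation with the zero mode giving $\pi^{2}\rho^{2}E^{2}B(\tfrac23,\tfrac43)/3$, and the same optimization yielding $\sqrt{B(\tfrac23,\tfrac43)}/9$. The only differences are cosmetic (you correctly drop the $m=0$ mode, and you verify the small-$E$ regime by a direct tail comparison where the paper cites Ilyin), so the argument is correct and matches the paper's proof.
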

\begin{proof}
We first prove the left side of the inequality $ \ref{thm2LT1} $, namely, $ \mathcal{K}_{4}(\mathbb{T}^{4}) \geq \frac{3}{16\pi^{2}}.$
Follows \cite{L14}, $ \{ \frac{e^{im\cdot x}}{4\pi^{2}} \}_{m\in \mathbb{Z}^{4}} $ is an orthonormal basis of $ L^{2}(\mathbb{T}^{4}),$ where $ m = (m_{1}, m_{2}, m_{3}, m_{4})$ with $ |m| = \sqrt{m_{1}^{2} + m_{2}^{2} + m_{3}^{2} + m_{4}^{2} }.$
Fixed $ M > 0 $, taking $ \{ u_{j} \}_{j} $ for the  orthonormal series $ \{ \frac{e^{im\cdot x}}{4\pi^{2}} \}_{m\in \mathbb{Z}^{4}}$, where $ m $ satisfies  $ 0 \leq m_{k} \leq M(k = 1, 2, 3, 4), $ so that
\begin{eqnarray}\label{S3a1}
\begin{split}
\int_{\mathbb{T}^{4}} \sum_{j}u_{j}^{2}(x)dx &=  \int_{\mathbb{T}^{4}} \sum_{0 \leq m_{k} \leq M}\big( \frac{e^{im\cdot x}}{4\pi^{2}} \big)^{2}(x) = ( M + 1)^{4}\\
&\leq (2\pi)^{\frac{4}{3}}\Big(  \int_{\mathbb{T}^{4}}\big(  \sum_{0 \leq m_{k} \leq M}( \frac{e^{im\cdot x}}{4\pi^{2}})^{2}(x)  \big)^{\frac{3}{2}}  \Big)^{\frac{2}{3}}.
\end{split}
\end{eqnarray}
Then, applying Lieb-Thirring inequality (\ref{thm2LT}) to the last term of (\ref{S3a1}), we have
\begin{eqnarray}\label{S3a2}
\begin{split}
\int_{\mathbb{T}^{4}}\big(  \sum_{0 \leq m_{k} \leq M}( \frac{e^{im\cdot x}}{4\pi^{2}})^{2}(x)  \big)^{\frac{3}{2}} &\leq \mathcal{K}_{4}(\mathbb{T}^{4}) \sum_{0 \leq m_{k} \leq M}\int_{\mathbb{T}^{4}}|\nabla \frac{e^{im\cdot x}}{4\pi^{2}} |^{2}dx\\
&\leq \mathcal{K}_{4}(\mathbb{T}^{4})\sum_{t = 1}^{M}4(M + 1)^{3}t^{2}\\
&= \frac{2}{3}M(M + 1)^{4}(2M + 1)\mathcal{K}_{4}(\mathbb{T}^{4}).
\end{split}
\end{eqnarray}
Inserting inequality (\ref{S3a2}) into inequality (\ref{S3a1}), and taking both sides  to the 2/3 power, which gives
\begin{eqnarray}\label{S3a3}
(M + 1)^{6} \leq \frac{2}{3}(2\pi^{2})M(M + 1)^{4}(2M + 1)\mathcal{K}_{4}(\mathbb{T}^{4}).
\end{eqnarray}
Taking $ M $ tends to $ \infty $, we obtain
$$ \mathcal{K}_{4}(\mathbb{T}^{4}) \geq \frac{3}{16\pi^{2}} = 0.0190.$$

Next, we turn to prove the right side of inequality (\ref{thm2LT1}). For any $u_{j} \in L^{2}(\mathbb{T}^{4})$, the Fourier analysis on torus (Chapter $3$ in  \cite{L14}) shows that
\begin{eqnarray}\label{S31}
u_{j}(x) = \frac{1}{4\pi^{2}}\sum_{k\in Z_{0}^{4}} u_{j}^{k} e^{ik\cdot x}, \ \ \ u_{j}^{k} = \frac{1}{4\pi^{2}} \int_{\mathbb{T}^{4}}  u_{j}(x) e^{-ik\cdot x} dx,
\end{eqnarray}
where $ \mathbb{Z}_{0} = \mathbb{Z}^{4}\backslash \{0\},$
with $ z \in \mathbb{Z}_{0} $
$$ z = (z_{1}, z_{2}, z_{3}, z_{4})\ \  \text{and}\ \ |z| = \sqrt{z_{1}^{2} + z_{2}^{2} + z_{3}^{2} + z_{4}^{2} }.$$
Using (\ref{S31}), we have
\begin{eqnarray}\label{S32}
\| u_{j}(x) \|^{2} = \int_{\mathbb{T}^{4}} | u_{j}(x) |^{2} dx = \sum_{k \in \mathbb{Z}_{0}^{4}} | u_{j}^{k} |^{2},
\end{eqnarray}
and from (\ref{S32}) the gradient of $ u_{j}(x) $:
\begin{eqnarray}\label{S33}
\| \nabla u_{j}(x) \|^{2} = \int_{\mathbb{T}^{4}} | \nabla u_{j}(x) |^{2} dx = \sum_{ k \in \mathbb{Z}_{0}^{4} } | k |^{2} |u_{j}^{k}|^{2} .
\end{eqnarray}
Togather (\ref{S32}) with (\ref{S21}), and applying the calculation methods from (\ref{S28}) to (\ref{S33}), we obtain
\begin{eqnarray}\label{S34}
\begin{split}
\| \nabla u_{j}(x) \|^{2} &= \int_{0}^{\infty} \sum_{k \in \mathbb{Z}_{0}^{4} } \big( g( \frac{E}{|k|^{2}} ) \big)^{2}| u_{j}^{k} |^{2} dE \\
&\doteq \int_{ \mathbb{T}^{4} } \int_{0}^{\infty}| u_{j}^{E}(x) |^{2} dE dx,
\end{split}
\end{eqnarray}
where
\begin{equation}\label{S35}
 u_{j}^{E}(x)  = \frac{1}{4 \pi^{2}} \sum_{ k\in \mathbb{Z}_{0}^{4} } g( \frac{E}{|k|^{2}} ) u_{j}^{k} e^{ik\cdot x}.
\end{equation}
(\ref{S31}) and (\ref{S35}) implies that
\begin{eqnarray}\label{S36}
\begin{split}
\sum_{j = 1}^{N}|u_{j}(x) - u_{j}^{E}(x)|^{2} &= \sum_{j = 1}^{N}| \frac{1}{4\pi^{2}} \sum_{k \in \mathbb{Z}_{0}^{4} } \big(1 - g(\frac{E}{| k |^{2}})  \big)u_{j}^{k}e^{ik \cdot x}|^{2} \\
&\doteq \sum_{j = 1}^{N}| \big( u_{j}(x'),  \Phi^{E}(x', x) \big)|^{2},
\end{split}
\end{eqnarray}
where
$$ \Phi^{E}(x', x) = \frac{1}{4 \pi^{2}} \sum_{ k\in \mathbb{Z}_{0}^{4} }\big( 1 - g( \frac{E}{|k|^{2}} )\big)e^{ik\cdot x'} e^{-ik\cdot x},  $$
and
$$ \big( u_{j}(x'),  \Phi^{E}(x', x) \big) \doteq \int_{\mathbb{T}^{4}} u_{j}(x')\overline{\Phi^{E}}(x', x)dx'.$$
Since $ \{u_{j}\} $ is orthonormal series in $ H^{1}(\mathbb{T}^{4}),$ by Bessel's inequality
\begin{eqnarray}\label{S37}
\begin{split}
\sum_{j = 1}^{N}|u_{j}(x) - u_{j}^{E}(x)|^{2} &= \sum_{j = 1}^{N}| \big( u_{j}(x'),  \Phi^{E}(x', x) \big)|^{2}\\
&\leq \| \Phi^{E}(x', x) \|^{2}\\
&= \frac{1}{16\pi^{4}} \sum_{ k\in \mathbb{Z}_{0}^{4} }\big( 1 - g( \frac{E}{|k|^{2}} )\big)^{2}.
\end{split}
\end{eqnarray}
Next, the key step is to estimate  $ \| \Phi^{E}(x', x) \|^{2},$ namely, to estimate the sum of the series $\{ \big( 1 - g( \frac{E}{|k|^{2}} )\big)^{2} \}_{k\in \mathbb{Z}_{0}^{4} }.$
Before that, we have to determine the concrete expression of $ g(t) $ in (\ref{S21}).
Using the Lemma $ 5 $ in \cite{FHJN21} again, we  obtain the expression of $g$:
\begin{eqnarray}\label{S38}
g(t) = \frac{1}{1 + {(\mu t)}^{3}},
\end{eqnarray}
where
$$ \mu  =   \frac{4\pi}{9\sqrt{3}}.$$
Inserting (\ref{S38}) into (\ref{S37}), we have
\begin{eqnarray}\label{S39}
\begin{split}
\| \Phi^{E}(x', x) \|^{2} &= \frac{1}{16\pi^{4}} \sum_{ k\in \mathbb{Z}_{0}^{4} }\big( 1 - g( \frac{E}{|k|^{2}} )\big)^{2} \\
&= \frac{1}{16\pi^{4}} \sum_{ k\in \mathbb{Z}_{0}^{4} } \Big(  1 - \frac{1}{1 + \big(\frac{\mu E}{|k|^{2}}\big)^{3}}     \Big)^{2} \\
&= \frac{1}{16\pi^{4}} \sum_{ k\in \mathbb{Z}_{0}^{4} } \frac{1}{\Big(  1 + \big(\frac{|k|^{2}}{\mu E}\big)^{3}     \Big)^{2}}\\
&\doteq \frac{1}{16\pi^{4}} \sum_{ k\in \mathbb{Z}_{0}^{4} }\varphi(\frac{k}{\nu}),
\end{split}
\end{eqnarray}
where $ \nu = \sqrt{\mu E} $ and
$$ \varphi(x) = \frac{1}{\big(1 +   |x|^{6}  \big)^{2}}, \ \ x \in \mathbb{R}^{4} $$
with $ | x |= \sqrt{x_{1}^{2} + x_{2}^{2} + x_{3}^{2} + x_{4}^{2} }.$
From the expression of the function $\varphi$, $\varphi(x) \in L^{2}(\mathbb{R}^{4}).$
Therefore, the Fourier Transform of $\hat{\varphi}$ is defined, namely,
$$ \hat{\varphi}(\xi) \doteq \frac{1}{4\pi^{2}}\int_{\mathbb{R}^{4}} \varphi(x)e^{-i\xi\cdot x}dx. $$
By the Possion summation formula (Chapter 3 in \cite{L14}):
$$ \sum_{m\in \mathbb{Z}^{4}} f(m/\nu) = (2\pi^{2})\nu^{4} \sum_{m\in \mathbb{Z}^{4}}\hat{f}(2\pi m \nu), $$
then, the right side of (\ref{S39}) can be reformulated as follows:
\begin{eqnarray}\label{S310}
\begin{split}
\sum_{x\in \mathbb{Z}_{0}^{4}}\varphi(\frac{k}{\nu}) &= (2\pi)^{2}\nu^{4}\sum_{k\in \mathbb{Z}^{4}} \hat{\varphi}(2\pi |k|\nu) - 1 \\
&= \nu^{4}\int_{\mathbb{R}^{4}} \varphi(x)dx + (2\pi)^{2}\nu^{4}\sum_{k\in \mathbb{Z}_{0}^{4}} \hat{\varphi}(2\pi |k|\nu) - 1 \\
&= \nu^{4}\int_{\mathbb{R}^{4}} \varphi(x)dx + O(e^{-C\nu}) - 1,
\end{split}
\end{eqnarray}
where $C$ is a positive constant. Thus, there exists a positive constant $ \nu_{0} \  (0 <  \nu_{0} <  \infty) $ such that
\begin{eqnarray}\label{S311}
\begin{split}
\sum_{x\in \mathbb{Z}_{0}^{4}}\varphi(\frac{k}{\nu}) &\leq \nu^{4}\int_{\mathbb{R}^{4}} \varphi(x)dx\\
&= \frac{\omega_{3}B(\frac{2}{3}, \frac{4}{3})}{6}\nu^{4},
\end{split}
\end{eqnarray}
holds on for $ \nu > \nu_{0}. $
On the other hand, the analysis in section 4.2 of \cite{I12} shows that the inequality (\ref{S311}) also holds on  for any $ \nu  \in [0, \nu_{0} ].$
Combining (\ref{S311}) and (\ref{S39}), we can finish the estimation of $ \| \Phi^{E}(x', x) \|^{2}, $ namely,
\begin{eqnarray}\label{S313}
\| \Phi^{E}(x', x) \|^{2}  \leq \frac{\omega_{3}B(\frac{2}{3}, \frac{4}{3})}{96 \pi^{4}}\nu^{4}= \frac{\mu^{2} \omega_{3}B(\frac{2}{3}, \frac{4}{3})}{96 \pi^{4}} E^{2} \doteq \bar{C}E^{2},
\end{eqnarray}
where $ \bar{C} = \frac{\mu^{2} \omega_{3}B(\frac{2}{3}, \frac{4}{3})}{96 \pi^{4}}.$
We now return to  the series $ \{ u_{j}(x)\} $, for any $ \varepsilon > 0 $, we have
\begin{eqnarray}\label{S312}
\begin{split}
\sum_{j = 1}^{N}|u_{j}(x)|^{2} &= \sum_{j = 1}^{N}\big(| u_{j}(x) - u_{j}^{E}(x) | +   |u_{j}^{E}(x)|  \big)^{2} \\
&\leq (1 + \varepsilon)\sum_{j = 1}^{N}|u_{j}^{E}(x)|^{2} + (1 + \frac{1}{\varepsilon})\sum_{j = 1}^{N}| u_{j}(x) - u_{j}^{E}(x) |^{2}. \\
\end{split}
\end{eqnarray}
Inserting (\ref{S37}) and (\ref{S313}) into (\ref{S312}), and optimizing it with  respect to $\varepsilon$, we obtain
\begin{eqnarray}\label{S314}
\begin{split}
\sum_{j = 1}^{N}|u_{j}(x)|^{2} &\leq (1 + \varepsilon)\sum_{j = 1}^{N}|u_{j}^{E}(x)|^{2} + (1 + \frac{1}{\varepsilon})\sum_{j = 1}^{N}| u_{j}(x) - u_{j}^{E}(x) |^{2}\\
&\leq (1 + \varepsilon)\sum_{j = 1}^{N}|u_{j}^{E}(x)|^{2} + (1 + \frac{1}{\varepsilon})\bar{C}E^{2}\\
&\leq \Big( \sqrt{\sum_{j = 1}^{N}|u_{j}^{E}(x)|^{2}}  + \sqrt{\bar{C}E^{2}}   \Big)^{2}
\end{split}
\end{eqnarray}
Solving $ \sum_{j = 1}^{N}|u_{j}^{E}(x)|^{2} $ from inequality (\ref{S314}), we have
\begin{eqnarray}\label{S315}
\sum_{j = 1}^{N}|u_{j}^{E}(x)|^{2} \geq \Big( \sqrt{\sum_{j = 1}^{N}|u_{j}(x)|^{2}} - \sqrt{\bar{C}E^{2}} \Big)_{+}^{2}.
\end{eqnarray}
Denote $ \phi(x) = \sum_{j = 1}^{N}|u_{j}(x)|^{2} ,$ summing  both sides of the equation (\ref{S34}) with respect to $j$, and together with (\ref{S315}) yields
\begin{eqnarray}\label{S316}
\begin{split}
\sum_{j = 1}^{N} \| \nabla u_{j}(x) \|^{2} &= \int_{ \mathbb{T}^{4} } \int_{0}^{\infty} \sum_{j = 1}^{N} |u_{j}^{E}(x) |^{2} dE dx\\
&\geq \int_{ \mathbb{T}^{4} } \int_{0}^{\infty}\Big( \sqrt{\sum_{j = 1}^{N}|u_{j}(x)|^{2}} - \sqrt{\bar{C}E^{2}} \Big)_{+}^{2}dE dx\\
&= \int_{ \mathbb{T}^{4} } \int_{0}^{\sqrt{\frac{\phi(x)}{\bar{C}}}}\Big( \phi(x) -2\sqrt{\bar{C} \phi(x)E^{2}} + \bar{C}E^{2} \Big)^{2}dE dx\\
&= \frac{1}{3\sqrt{\bar{C}}}\int_{ \mathbb{T}^{4} } \big( \phi(x)\big)^{1 + \frac{2}{4}} dx\\
&= \frac{1}{3\sqrt{\frac{\mu^{2} \omega_{3}B(\frac{2}{3}, \frac{4}{3})}{96 \pi^{4}}}}\int_{ \mathbb{T}^{4} } \big( \sum_{j = 1}^{N}|u_{j}(x)|^{2}\big)^{1 + \frac{2}{4}}dx,
\end{split}
\end{eqnarray}
where
$$ \omega_{3} = 2\pi^{2},\ \ \ \mu = \frac{4\pi}{9\sqrt{3}}.$$
Thus,
$$ \sum_{j = 1}^{N} \| \nabla u_{j}(x) \|^{2} \geq \frac{9}{\sqrt{B(\frac{2}{3}, \frac{4}{3})}}\int_{ \mathbb{T}^{4} } \big( \sum_{j = 1}^{N}|u_{j}(x)|^{2}\big)^{ \frac{3}{2}}dx $$
So we obtain
$$ \mathcal{K}_{4}(\mathbb{T}^{4}) \leq \frac{\sqrt{B(\frac{2}{3}, \frac{4}{3})}}{9} = 0.1222. $$
This completes the proof of the theorem.
\end{proof}


\begin{thebibliography}{9}


\bibitem{AL78}
             M. Aizenman, E. H. Lieb, On semi-classical bounds for eigenvalues of Schr\"odinger operators. Phys. Lett. A, 66 (1978), no. 6, 427-429.

\bibitem{BS96}
             P. Blanchard, J. Stubbe, Bound states for Schr\"odinger Hamiltonians: phase space methods and applications. Rev. Math. Phys. 8 (1996), no. 4, 503-547.

\bibitem{C77}
             M. Cwikel, Weak type estimates for singular values and the number of bound states of Schr\"odinger operators. Ann. of Math. (2) 106 (1977), no. 1, 93-100.

\bibitem{DLL08}
             J. Dolbeault, A. Laptev, M. Loss, Lieb-Thirring inequalities with improved constants. J. Eur. Math. Soc. 10 (2008), no. 4, 1121-1126.

\bibitem{EF91}
             A. Eden, C. Foias, A simple proof of the generalized Lieb-Thirring inequalities in one-space dimension. J. Math. Anal. Appl. 162 (1991), no. 1, 250-254.

\bibitem{F21}
             R. L. Frank, Nine Mathematical Challenges: An Elucidation, chapter The Lieb–Thirring
             inequality: Recent results and open problems. Proc. Symp. Pure Math. Amer. Math. Soc., 2021.

\bibitem{FHJN21}
             R. L. Frank, D. Hundertmark, M. Jex, P. T. Nam, The Lieb-Thirring inequality revisited. J. Eur. Math. Soc. 23 (2021), no. 8, 2583-2600.

\bibitem{HLT98}
             D. Hundertmark, E. H. Lieb, L. E. Thomas, A sharp bound for an eigenvalue moment of the one-dimensional Schr\"odinger operator. Adv. Theor. Math. Phys. 2 (1998), no. 4, 719-731.
\bibitem{HLW00}
             D. Hundertmark, A. Laptev, T. Weidl, New bounds on the Lieb-Thirring constants. Invent. Math. 140 (2000), no. 3, 693-704.

\bibitem{I93}
             A. A. Ilyin, Lieb-Thirring inequalities on the N-sphere and in the plane, and some applications, Proc. London Math. Soc. (3) 67 (1993), no. 1, 159-182.

\bibitem{I12}
             A. A. Ilyin, Lieb-Thirring inequalities on some manifolds, J. Spectr. Theory 2 (2012), no. 1, 57-78.

\bibitem{IL19}
             A. A. Ilyin, A. A. Laptev, Lieb-Thirring inequalities on the sphere, Algebra Anal. 31 (3) (2019) 116-135; English transl. in St. Petersburg Math. J. 31 (3) (2020).

\bibitem{IL20}
             A. A. Ilyin, A. A. Laptev, S. Zelik, Lieb-Thirring constant on the sphere and on the torus, J. Funct. Anal. 279 (2020), no. 12, 108784.

\bibitem{K61}
             J. B. Keller, Lower bounds and isoperimetric inequalities for eigenvalues of the Schr\"odinger equation, J. Mathematical Phys. 2 (1961), 262-266.

\bibitem{L76}
             E. H. Lieb, Bounds on the eigenvalues of the Laplace and Schr\"odinger operators. Bull. Amer. Math. Soc. 82 (1976), no. 5, 751-753.

\bibitem{L84}
             E. H. Lieb, On characteristic exponents in turbulence. Comm. Math. Phys. 92 (1984), no. 4, 473-480.

\bibitem{L79}
             E. H. Lieb, The number of bound states of one-body Schr\"odinger operators and the Weyl problem. In: Geometry of the Laplace operator (Proc. Sympos. Pure Math., Univ. Hawaii, Honolulu, Hawaii, 1979), pp. 241-252, Proc. Sympos. Pure Math., XXXVI, Amer. Math. Soc., Providence, R.I., 1980.

\bibitem{L14}
             G. Loukas, Classical Fourier analysis. Third edition. Springer, New York, 2014.

\bibitem{LT76}
             E. H. Lieb, W. E. Thirring, Inequalities for the moments of the eigenvalues of the Schr\"odinger hamiltonian and their relation to Sobolev inequalities, In: Studies in Mathematical Physics. Princeton University Press, 1976, 269-303.


\bibitem{LW00}
             A. Laptev, T. Weidl, Sharp Lieb-Thirring inequalities in high dimensions. Acta Math. 184 (2000), no. 1, 87-111.

\bibitem{R72}
             G. V. Rozenbljum, Distribution of the discrete spectrum of singular differential operators. (Russian) Dokl. Akad. Nauk SSSR 202 (1972), 1012-1015. English translation in Soviet Math. Dokl. 13 (1972), 245-249.

\bibitem{R76}
             G. V. Rozenbljum, Distribution of the discrete spectrum of singular differential operators. (Russian) Izv. Vys\v{s}. U\v{c}ebn. Zaved. Matematika 164 (1976), no. 1, 75-86. English translation in Soviet Math. (Iz. VUZ) 20 (1976), no. 1, 63-71.

\bibitem{SW72}
             E. M. Stein, G. Weiss, Introduction to Fourier analysis on Euclidean spaces. Princeton University Press, Princeton NJ, 1972.

\bibitem{T97}
             R. Temam, Infinite dimensional dynamical systems in mechanics and physis. Second edition. Springer-Verlag, New York, 1997.


\end{thebibliography}
\end{document}